\newtheorem{theorem}{Theorem}
\newtheorem{definition}[theorem]{Definition}
\newtheorem*{remark}{Remark}
\Crefname{conjecture}{Conjecture}{Conjectures}
\theoremstyle{plain}
\newtheorem{example}[theorem]{Example}
\theoremstyle{plain}
\numberwithin{equation}{section}
\author{Robert Schneider}
\address{Department of Mathematical Sciences\newline
Michigan Technological University\newline
Houghton, Michigan 49931, U.S.A.}
\email{robertsc@mtu.edu}
\title{Algebra of overpartitions}
\begin{document}
\begin{abstract}
In a 2022 paper, Dawsey, Just and the present author prove that the set of integer partitions, taken as a monoid under a partition multiplication operation I defined in my Ph.D. work, is isomorphic to the positive integers as a monoid under integer multiplication. In this note, I extend partition multiplication to the set of overpartitions, which are of much  interest in  partition theory. I prove  the overpartitions form an Abelian group under partition multiplication. Moreover, the overpartitions and the positive rational numbers are isomorphic as multiplicative  groups. I then prove further overpartition isomorphisms and discuss  approaches to a ring theory of overpartitions. 
\end{abstract}

\maketitle

\section{Introduction: concepts and notations}
\subsection{Partition multiplication}
In \cite{Robert_bracket}, the present author defines a multiplication operation on integer partitions, concatenation, which gives rise to a multiplicative theory of partitions analogous to multiplicative number theory in many respects; featuring partition-theoretic analogues of arithmetic functions, divisor sums, Dirichlet series, 
 and other topics studied in multiplicative number theory (see e.g.  \cite{Robert_zeta, Schneider_PhD}). In \cite{supernorm}, Dawsey, Just and I prove that  the  integer partitions  (under partition multiplication) and the positive integers (under multiplication in $\mathbb Z^+$) are isomorphic as monoids;   see  Figure \ref{figure1}.

Overpartitions, first described by MacMahon \cite{MacMahonI}, are  important objects of study in  partition theory \cite{over1, over3,  over2, Lovejoy}. Corteel and Lovejoy write, ``the theory of basic hypergeometric series contains a wealth of information about overpartitions, [and] many theorems and techniques for ordinary partitions have analogues for overpartitions.''\cite{Lovejoy} In this note, I prove   the set of {overpartitions} (under the same partition multiplication operation) and the {\it positive rational} numbers (under multiplication in $\mathbb Q^+$) are isomorphic as {\it groups}.

Let $\mathcal P$ denote the set of {\it integer partitions} (see \cite{Andrews}). 
We will denote a generic nonempty partition by $\lambda=(\lambda_1,\lambda_2,\dots,$ $\lambda_r),\ \lambda_i \in\mathbb Z^+,\   \lambda_1\geq\lambda_2\geq\dots\geq\lambda_r\geq 1$, with $\emptyset\in\mathcal P$  the {\it empty partition}. Let $\ell(\lambda):=r$ denote the partition {\it length} (number of parts). Let $m_i=m_i(\lambda)\geq 0$ denote the {\it multiplicity} (frequency) of $i$ as a part of the partition. Let $|\lambda|:=\lambda_1+\lambda_2+\dots+\lambda_r$ denote the partition {\it size} (sum of parts);   the {\it partition function} $p(n)$ counts the integer   partitions of size equal to $n$. 
We define $\ell(\emptyset)=m_i(\emptyset)=|\emptyset|=0.$ 

In this note, we will also make use of the {\it part-multiplicity} notation for partitions,  $$\lambda = \left<1^{m_1} 2^{m_2}3^{m_3}\cdots i^{m_i}\cdots\right>\in \mathcal P$$ where each part $i\in\mathbb Z^+$ has its multiplicity $m_i\geq 0$ as a superscript, allowing only finitely many nonzero multiplicities $m_i$ (and we omit parts that have multiplicity zero); with $\emptyset =\left< 1^0\  2^0\  3^0  \cdots i^0  \cdots \right>$. For instance, we will rewrite $(7, 5, 5, 2, 2, 2, 1)= \left<1^{1} \  2^{3}\  5^{2}\  7^{1}\right>$.

In \cite{Robert_bracket}, I define a {\it partition multiplication} operation equivalent to the following. For integer partitions $\lambda = \left<1^{m_1} 2^{m_2}3^{m_3}\cdots i^{m_i}\cdots\right>,\  \gamma = \left<1^{n_1} 2^{n_2}3^{n_3}\cdots i^{n_i}\cdots\right>,$ $m_i\geq 0, n_i \geq 0,$ 
define their product $\lambda \cdot \gamma\in \mathcal P$ by concatenation,  summing the corresponding multiplicities:
\begin{equation}\lambda\cdot  \gamma := \left<1^{m_1+n_1} 2^{m_2+n_2}3^{m_3+n_3}\cdots i^{m_i+n_i}\cdots\right>\in\mathcal P.\end{equation}

Thus the empty partition $\emptyset$ is the multiplicative identity, and $(\mathcal P, \  \cdot\  )$ is a monoid. Since  addition of   multiplicities is commutative, partition multiplication is commutative.  

\begin{figure}
\begin{center}
	    \begin{tikzpicture}
	        \begin{scope}[scale=1.1]
	            \node at (0,0){$\emptyset$};

	            \node at (-2,1){$\left<1^1\right>$};
	            \node at (0,1){$\left<2^1\right>$};
	            \node at (2,1){$\left<3^1\right>$};

	            \node at (-3.5,2){$\left<1^2\right>$};
	            \node at (-2,2){$\left<1^1  2^1\right>$};
	            \node at (-.5,2){$\left<2^2\right>$};
	            \node at (.5,2){$\left<1^1  3^1\right>$};
	            \node at (2,2){$\left<2^1  3^1\right>$};
	            \node at (3.5,2){$\left<3^2\right>$};

	            \node at (-4.5,3.75){$\left<1^3\right>$};
	            \node at (-3.5,4){$\left<1^2  2^1\right>$};
	            \node at (-2.5,3.75){$\left<1^2 3^1\right>$};
	            \node at (-1.5,4){$\left<1^1  2^2\right>$};
	            \node at (-.5,3.75){$\left<2^3\right>$};
	            \node at (0.5,4){$\left<2^2  3^1\right>\  $};
	            \node at (1.5,3.75){$\left<1^1  2^1  3^1\right>$};
	            \node at (2.5,4){$\  \left<1^1  3^2\right>$};
	            \node at (3.5,3.75){$\left<2^1  3^2\right>$};
	            \node at (4.5,4){$\left<3^3\right>$};

	            \draw[shorten <=.1in,shorten >=.1in](0,0)--(0,1);
	            \draw[shorten <=.1in,shorten >=.1in](0,0)--(-2,1);
	            \draw[shorten <=.1in,shorten >=.1in](0,0)--(2,1);
	            \draw[shorten <=.1in,shorten >=.1in](0,0)--(4,1);
	            \draw[shorten <=.1in,shorten >=.1in](-2,1)--(-2,2);
	            \draw[shorten <=.1in,shorten >=.1in](0,1)--(-.5,2);
	            \draw[shorten <=.2in,shorten >=.2in](0,1)--(-2,2);
	            \draw[shorten <=.1in,shorten >=.1in](2,1)--(2,2);
	            \draw[shorten <=.2in,shorten >=.2in](-2,1)--(-3.5,2);
	            \draw[shorten <=.2in,shorten >=.2in](-2,1)--(.5,2);
	            \draw[shorten <=.2in,shorten >=.2in](0,1)--(2,2);
	            \draw[shorten <=.2in,shorten >=.2in](2,1)--(.5,2);
	            \draw[shorten <=.2in,shorten >=.2in](2,1)--(3.5,2);

	            \draw[shorten <=.2in,shorten >=.2in](-3.5,2)--(-4.5,3.75);
	            \draw[shorten <=.2in,shorten >=.2in](-3.5,2)--(-3.5,4);
	            \draw[shorten <=.2in,shorten >=.2in](-3.5,2)--(-2.5,3.75);

	            \draw[shorten <=.2in,shorten >=.2in](-2,2)--(-3.5,4);
	            \draw[shorten <=.2in,shorten >=.2in](-2,2)--(-1.5,4);
	            \draw[shorten <=.2in,shorten >=.2in](-2,2)--(1.5,3.75);

	            \draw[shorten <=.2in,shorten >=.2in](-.5,2)--(-1.5,4);
	            \draw[shorten <=.2in,shorten >=.2in](-.5,2)--(-.5,3.75);
	            \draw[shorten <=.2in,shorten >=.2in](-.5,2)--(.5,4);

	            \draw[shorten <=.2in,shorten >=.2in](.5,2)--(-2.5,3.75);
	            \draw[shorten <=.2in,shorten >=.2in](.5,2)--(1.5,3.75);
	            \draw[shorten <=.2in,shorten >=.2in](.5,2)--(2.5,4);

	            \draw[shorten <=.2in,shorten >=.2in](2,2)--(.5,4);
	            \draw[shorten <=.2in,shorten >=.2in](2,2)--(1.5,3.75);
	            \draw[shorten <=.2in,shorten >=.2in](2,2)--(3.5,3.75);

	            \draw[shorten <=.2in,shorten >=.2in](3.5,2)--(2.5,4);
	            \draw[shorten <=.2in,shorten >=.2in](3.5,2)--(3.5,3.75);
	            \draw[shorten <=.2in,shorten >=.2in](3.5,2)--(4.5,4);

	            \node at (4,1){$\ldots$};

	            \node at (-4.5,4.5){$\vdots$};
	            \node at (-3.5,4.5){$\vdots$};
	            \node at (-2.5,4.5){$\vdots$};
	            \node at (-1.5,4.5){$\vdots$};
	            \node at (-.5,4.5){$\vdots$};
	            \node at (4.5,4.5){$\vdots$};
	            \node at (3.5,4.5){$\vdots$};
	            \node at (2.5,4.5){$\vdots$};
	            \node at (1.5,4.5){$\vdots$};
	            \node at (.5,4.5){$\vdots$};

	        \end{scope}
	    \end{tikzpicture}
	\end{center}

		\begin{center}
	    \begin{tikzpicture}
	        \begin{scope}[scale=1.1]
	            \node at (0,0){1};

	            \node at (-2,1){2};
	            \node at (0,1){3};
	            \node at (2,1){5};

	            \node at (-3.5,2){4};
	            \node at (-2,2){6};
	            \node at (-.5,2){9};
	            \node at (.5,2){10};
	            \node at (2,2){15};
	            \node at (3.5,2){25};

	            \node at (-4.5,3.75){8};
	            \node at (-3.5,4){12};
	            \node at (-2.5,3.75){20};
	            \node at (-1.5,4){18};
	            \node at (-.5,3.75){27};
	            \node at (0.5,4){45};
	            \node at (1.5,3.75){30};
	            \node at (2.5,4){50};
	            \node at (3.5,3.75){75};
	            \node at (4.5,4){125};

	            \draw[shorten <=.1in,shorten >=.1in](0,0)--(0,1);
	            \draw[shorten <=.1in,shorten >=.1in](0,0)--(-2,1);
	            \draw[shorten <=.1in,shorten >=.1in](0,0)--(2,1);
	            \draw[shorten <=.1in,shorten >=.1in](0,0)--(4,1);
	            \draw[shorten <=.1in,shorten >=.1in](-2,1)--(-2,2);
	            \draw[shorten <=.1in,shorten >=.1in](0,1)--(-.5,2);
	            \draw[shorten <=.2in,shorten >=.2in](0,1)--(-2,2);
	            \draw[shorten <=.1in,shorten >=.1in](2,1)--(2,2);
	            \draw[shorten <=.2in,shorten >=.2in](-2,1)--(-3.5,2);
	            \draw[shorten <=.2in,shorten >=.2in](-2,1)--(.5,2);
	            \draw[shorten <=.2in,shorten >=.2in](0,1)--(2,2);
	            \draw[shorten <=.2in,shorten >=.2in](2,1)--(.5,2);
	            \draw[shorten <=.2in,shorten >=.2in](2,1)--(3.5,2);

	            \draw[shorten <=.2in,shorten >=.2in](-3.5,2)--(-4.5,3.75);
	            \draw[shorten <=.2in,shorten >=.2in](-3.5,2)--(-3.5,4);
	            \draw[shorten <=.2in,shorten >=.2in](-3.5,2)--(-2.5,3.75);

	            \draw[shorten <=.2in,shorten >=.2in](-2,2)--(-3.5,4);
	            \draw[shorten <=.2in,shorten >=.2in](-2,2)--(-1.5,4);
	            \draw[shorten <=.2in,shorten >=.2in](-2,2)--(1.5,3.75);

	            \draw[shorten <=.2in,shorten >=.2in](-.5,2)--(-1.5,4);
	            \draw[shorten <=.2in,shorten >=.2in](-.5,2)--(-.5,3.75);
	            \draw[shorten <=.2in,shorten >=.2in](-.5,2)--(.5,4);

	            \draw[shorten <=.2in,shorten >=.2in](.5,2)--(-2.5,3.75);
	            \draw[shorten <=.2in,shorten >=.2in](.5,2)--(1.5,3.75);
	            \draw[shorten <=.2in,shorten >=.2in](.5,2)--(2.5,4);

	            \draw[shorten <=.2in,shorten >=.2in](2,2)--(.5,4);
	            \draw[shorten <=.2in,shorten >=.2in](2,2)--(1.5,3.75);
	            \draw[shorten <=.2in,shorten >=.2in](2,2)--(3.5,3.75);

	            \draw[shorten <=.2in,shorten >=.2in](3.5,2)--(2.5,4);
	            \draw[shorten <=.2in,shorten >=.2in](3.5,2)--(3.5,3.75);
	            \draw[shorten <=.2in,shorten >=.2in](3.5,2)--(4.5,4);

	            \node at (4,1){$\ldots$};

	            \node at (-4.5,4.5){$\vdots$};
	            \node at (-3.5,4.5){$\vdots$};
	            \node at (-2.5,4.5){$\vdots$};
	            \node at (-1.5,4.5){$\vdots$};
	            \node at (-.5,4.5){$\vdots$};
	            \node at (4.5,4.5){$\vdots$};
	            \node at (3.5,4.5){$\vdots$};
	            \node at (2.5,4.5){$\vdots$};
	            \node at (1.5,4.5){$\vdots$};
	            \node at (.5,4.5){$\vdots$};

	        \end{scope}
	    \end{tikzpicture}
	\end{center}
	\caption{Illustration of monoid isomorphism reproduced from \cite{supernorm}: partitions ordered by multiset inclusion, positive integers ordered by divisibility}
	\label{figure1}
\end{figure}

\subsection{Multiplication of overpartitions}
Let $\mathcal O$ denote the set of {\it overpartitions} (see  \cite{Lovejoy}), partitions where the first instance of a given part size to occur, can either be marked with an overline (bar) or left unmarked; with $\emptyset$ the empty overpartition. E.g., partition $(3, 2, 2, 2, 1, 1)$ gives rise to the overpartitions $(\overline{3}, 2, 2, 2, 1, 1),$ $(3, \overline{2}, 2, 2, 1, 1),$ $(3, 2, 2, 2, \overline{1}, 1),$ $(\overline{3}, \overline{2}, 2, 2, 1, 1), (\overline{3}, 2, 2, 2, \overline{1}, 1), (3, \overline{2}, 2, 2, \overline{1}, 1),$ $(\overline{3}, \overline{2}, 2, 2, \overline{1}, 1)$, and $(3, 2, 2, 2, 1, 1)$ itself. We extend  the partition statistics  above to overpartitions, with identical meanings; i.e., for $\alpha\in\mathcal O$,  let $\ell(\alpha)$ denote the number of parts, $|\alpha|$ denote the sum of parts, etc. 

We  extend part-multiplicity notation to $\mathcal O$, as follows.

\begin{definition}\label{def1}
For overpartition $\alpha \in \mathcal O$, we write
$$\alpha= \left<1^{\mu_1} 2^{\mu_2}3^{\mu_3}\cdots i^{\mu_i}\cdots\right>, \  \  \  \mu_i\in \mathbb Z,$$ 
with multiplicity  $\mu_i$ of part $i\in\mathbb Z^+$
 labeled as   negative  if and only if part $i$ has an overline.
\footnote{We use Greek letters for overpartition multiplicities, to help distinguish overpartitions from partitions.} 
\end{definition}

\begin{remark}
 Parts-multiplicity notation for overpartitions originated with Michigan Technological University Ph.D. students Philip Cuthbertson and Hunter Waldron, during the course of my graduate group theory class (Fall 2023). Cuthbertson and Waldron both independently showed me that negative multiplicities  can be interpreted as overlines.\footnote{This refines the concepts of antipartitions and rational partitions   I introduced in \cite{Schneider_PhD}, Appendix B.3.1.} 
\end{remark}

One can directly  verify the bijection between overlined parts and negative multiplicities. If $\lambda\in \mathcal P$ is the partition one  forms from the parts of $\alpha\in \mathcal O$ whose multiplicities are   positive, and $\gamma\in \mathcal P$ the partition one  forms from the parts with negative multiplicities, we also define a {\it rational partition} form for the overpartition  by writing 
$\alpha = {\lambda}/{\gamma}\in \mathcal O.$ 

We  now extend the partition multiplication operation to overpartitions.

\begin{definition}\label{def2}
For overpartitions $\alpha= \left<1^{\mu_1} 2^{\mu_2}3^{\mu_3}\cdots i^{\mu_i}\cdots\right>, \beta= \left<1^{\nu_1} 2^{\nu_2}3^{\nu_3}\cdots i^{\nu_i}\cdots\right>$, with $\mu_i, \nu_i \in \mathbb Z$, we define their product by the overpartition
$$\alpha \cdot \beta = \left<1^{\mu_1+\nu_1} 2^{\mu_2+\nu_2}3^{\mu_3+\nu_3}\cdots i^{\mu_i+\nu_i}\cdots\right>\in \mathcal O,$$ 
with $\mu_i+\nu_i<0$   if and only if part $i\in \mathbb Z^+$ has an overline  in  $\alpha\cdot \beta$.
\end{definition}
Just as in the partition-theoretic case, the empty overpartition $\emptyset$ is the multiplicative identity, $(\mathcal O,\  \cdot\  )$ is also a monoid, and partition multiplication in $\mathcal O$ is commutative.

\section{Group theory of overpartitions}

\subsection{Overpartitions group structure}
We have  developed our conceptual background sufficiently to state the central proposition of this note.\footnote{Theorem \ref{thm} is an equivalent statement to Theorem B.3.2 in \cite{Schneider_PhD}, which concerns ``rational partitions''.}

\begin{theorem}\label{thm}
The overpartitions $(\mathcal O, \ \cdot\  )$ with partition multiplication 
form an Abelian group, with identity element $\emptyset$.
\end{theorem}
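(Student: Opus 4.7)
The plan is to verify the group axioms by transporting them from the free abelian group $\bigoplus_{i \geq 1} \mathbb{Z}$, which $(\mathcal{O}, \cdot)$ visibly resembles under the part-multiplicity notation of Definition \ref{def1}. Concretely, the map sending $\alpha = \left<1^{\mu_1} 2^{\mu_2} 3^{\mu_3}\cdots\right>$ to the sequence $(\mu_1, \mu_2, \mu_3, \ldots)$ is a bijection between $\mathcal{O}$ and the set of integer-valued sequences with finite support, and Definition \ref{def2} is precisely componentwise addition under this bijection. Every property I need will therefore follow coordinatewise from the corresponding property of $(\mathbb{Z}, +)$.

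First I would check closure: if $(\mu_i)$ and $(\nu_i)$ both have finite support and integer entries, then so does $(\mu_i + \nu_i)$, so $\alpha \cdot \beta \in \mathcal{O}$; the sign of each $\mu_i + \nu_i$ records, by convention, whether part $i$ is overlined in the product, exactly as Definition \ref{def2} stipulates. Associativity and commutativity then reduce immediately to the associativity and commutativity of $+$ on $\mathbb{Z}$ performed one index at a time. The empty overpartition $\emptyset$ corresponds to the zero sequence, so $\alpha \cdot \emptyset = \alpha = \emptyset \cdot \alpha$, confirming $\emptyset$ as the identity (this was already noted immediately after Definition \ref{def2}).

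The one novelty relative to the monoid $(\mathcal{P}, \cdot)$ is the existence of inverses, which is where the overline-as-negative-multiplicity convention of Cuthbertson and Waldron does the work. Given $\alpha = \left<\cdots i^{\mu_i}\cdots\right>\in\mathcal{O}$, I would define $\alpha^{-1} := \left<\cdots i^{-\mu_i}\cdots\right>$; negating each $\mu_i$ preserves integrality and finite support, so $\alpha^{-1} \in \mathcal{O}$, and it swaps overlined with unoverlined parts (equivalently, in rational-partition form $\alpha = \lambda/\gamma$ one has $\alpha^{-1} = \gamma/\lambda$). Then $\alpha \cdot \alpha^{-1} = \left<\cdots i^{\mu_i + (-\mu_i)}\cdots\right> = \left<\cdots i^{0}\cdots\right> = \emptyset$.

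There is no real obstacle here; the theorem amounts to the observation that Definition \ref{def1} has canonically identified $(\mathcal{O}, \cdot)$ with the free abelian group on the generators $\left<i^{1}\right>$, $i \in \mathbb{Z}^+$. If anything deserves care, it is only the verification that the inverse construction respects the overline/negative-multiplicity dictionary, i.e.\ that $\alpha^{-1}$ as written truly is a legitimate overpartition; but this is immediate from Definition \ref{def1} since negating an integer again yields an integer and the sign convention is self-consistent under negation.
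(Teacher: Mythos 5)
Your proposal is correct and follows essentially the same route as the paper: the paper's proof is a one-line reduction to the fact that multiplication of overpartitions is coordinatewise addition of integer multiplicities, so the group axioms (including inverses via negation) are inherited from $(\mathbb{Z},+)$, which is exactly the argument you spell out in detail. Your explicit identification with $\bigoplus_{i\geq 1}\mathbb{Z}$ is the same observation the paper records immediately after the theorem as the isomorphism $\mathcal O \cong \mathbb Z \oplus \mathbb Z \oplus \cdots$.
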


\begin{proof}
The theorem follows  from Definitions \ref{def1}
 and \ref{def2}, since the product of overpartitions depends on the addition of  multiplicities $\mu_i, \nu_i\in\mathbb Z$, and $(\mathbb Z, +  )$ is an Abelian group. 
\end{proof}

Knowing that $(\mathcal O,\  \cdot\  )$ is an Abelian group allows us to identify algebraic structures connected to the set of overpartitions. It follows immediately from Definition \ref{def1} that 
\begin{equation}\mathcal O \  \cong \  \mathbb Z \oplus \mathbb Z\oplus \mathbb Z \oplus \dots,\end{equation}
with partition multiplication on the left side of the map, and integer addition on the right. We can use methods from algebra to prove further isomorphisms.

\begin{theorem}\label{thm2}
The group of overpartitions $(\mathcal O,\  \cdot\  )$ with partition multiplication 
is isomorphic   to the positive rationals $(\mathbb Q^{+},\ \cdot\  )$ with rational number multiplication.
\end{theorem}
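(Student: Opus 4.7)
The plan is to lift the monoid isomorphism $(\mathcal{P},\cdot)\cong(\mathbb{Z}^+,\cdot)$ from \cite{supernorm}, pictured in Figure \ref{figure1} and given by $\langle 1^{m_1} 2^{m_2} 3^{m_3}\cdots\rangle \mapsto \prod_{i\geq 1} p_i^{m_i}$ (with $p_i$ the $i$-th prime), to the setting of $\mathcal{O}$ by allowing the exponents to be any integers. Concretely, I would define
\[
\varphi : \mathcal{O} \longrightarrow \mathbb{Q}^+, \qquad \varphi\bigl(\langle 1^{\mu_1} 2^{\mu_2} 3^{\mu_3}\cdots\rangle\bigr) \, := \, \prod_{i\geq 1} p_i^{\mu_i}.
\]
This is a well-defined element of $\mathbb{Q}^+$ because only finitely many $\mu_i$ are nonzero; its denominator, when the fraction is in lowest terms, is precisely $\prod_i p_i^{-\mu_i}$ taken over those indices $i$ for which part $i$ is overlined in $\alpha$, in the sense of Definition \ref{def1}.

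Verifying that $\varphi$ is a group isomorphism is then quick. The homomorphism property $\varphi(\alpha\cdot\beta)=\varphi(\alpha)\varphi(\beta)$ is immediate from Definition \ref{def2}, since overpartition multiplication adds the exponents $\mu_i+\nu_i$ just as rational multiplication adds the exponents appearing in prime factorizations. Bijectivity is the content of unique factorization in $\mathbb{Q}^+$: every positive rational admits a unique expression $\prod_i p_i^{a_i}$ with $a_i \in \mathbb{Z}$ and all but finitely many $a_i$ equal to zero, which is precisely the data carried by an overpartition in part-multiplicity form.

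Equivalently, one can argue abstractly from the decomposition $\mathcal{O}\cong\bigoplus_{i\geq 1}\mathbb{Z}$ recorded just after Theorem \ref{thm}, together with the well-known fact that the fundamental theorem of arithmetic gives $\mathbb{Q}^+\cong\bigoplus_{p\text{ prime}}\mathbb{Z}$; the bijection $i\mapsto p_i$ between $\mathbb{Z}^+$ and the set of primes then induces the desired isomorphism. I do not expect any genuinely hard step here: the theorem is essentially a reformulation of unique factorization in $\mathbb{Q}^+$ once negative multiplicities are read as reciprocals, and the only subtlety to keep track of is that the \emph{overlined part} $\leftrightarrow$ \emph{negative multiplicity} correspondence of Definition \ref{def1} is precisely what makes $\varphi$ land in $\mathbb{Q}^+$ rather than merely in $\mathbb{Z}^+$.
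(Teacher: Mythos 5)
Your proposal is correct and is essentially the paper's own argument: the map $\varphi$ you define is exactly the overpartition supernorm $\widehat{N}_{\mathcal O}(\alpha)=\prod_i p_i^{\mu_i}$ used in the paper, and your appeal to unique factorization in $\mathbb{Q}^+$ plays the same role as the paper's verification that the kernel is trivial and the image is all of $\mathbb{Q}^+$. No substantive differences to report.
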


\begin{proof}
Recall the {\it supernorm} statistic $\widehat{N}(\lambda)$ 
defined on partition $\lambda = \left<1^{m_1} 2^{m_2}3^{m_3}\cdots i^{m_i}\cdots\right>$  in \cite{supernorm}\footnote{See \cite{Lagarias} for a deep subsequent study of the supernorm map by J. Lagarias.}, by 
$$\widehat{N}(\lambda) := p_1^{m_1} p_2^{m_2} p_3^{m_3}\cdots p_i^{m_i}\cdots,$$
where $p_i \in \mathbb P$ is the $i$th prime number, viz. $p_1=2, p_2=3, p_3=5,$ etc. In \cite{supernorm}, the map $\widehat{N}$ induces an isomorphism of monoids between $\mathcal P$ and $\mathbb Z^+$. We will extend the supernorm to a map on overpartitions. For $\alpha= \left<1^{\mu_1} 2^{\mu_2}3^{\mu_3}\cdots i^{\mu_i}\cdots\right>\in\mathcal O$, define the {\it overpartition supernorm} map  $\widehat{N}_{\mathcal O}\colon \mathcal O\to\mathbb Q^+$ by
\begin{equation}
\widehat{N}_{\mathcal O}(\alpha) := p_1^{\mu_1} p_2^{\mu_2} p_3^{\mu_3}\cdots p_i^{\mu_i}\cdots=\frac{\widehat{N}(\lambda)}{\widehat{N}(\gamma)},
\end{equation}  
with $\alpha=\lambda/\gamma,\  \lambda, \gamma \in \mathcal P,$ being the rational form of the overpartition. 

For $\alpha, \beta\in\mathcal O$, we have  $\widehat{N}_{\mathcal O}(\alpha\cdot\beta)=\widehat{N}_{\mathcal O}(\alpha)\widehat{N}_{\mathcal O}(\beta)$ with $\widehat{N}_{\mathcal O}(\emptyset)=1$, the  identity in $(\mathbb Q^+,\  \cdot\  )$, thus the overpartition supernorm is a homomorphism of groups. The image of $\widehat{N}_{\mathcal O}$ is $\mathbb Q^+$, and the kernel is $\{\emptyset\}\subset \mathcal O$. The kernel being the identity  implies the map is injective, and we showed $\widehat{N}_{\mathcal O}$ is surjective, thus the map is an isomorphism. 
\end{proof}

Recalling    $\mathcal P$ is isomorphic to $\mathbb Z^+$ as multiplicative {monoids}  \cite{supernorm},    
Theorem \ref{thm2} tells us that under their respective multiplication operations,   {\it the overpartitions $\mathcal O$ extend the  monoid $\mathcal P$ to a group, just as  the positive rationals $\mathbb Q^+$ extend the  monoid $\mathbb Z^+$}.\footnote{Thank you to H. Waldron for a useful conversation on this subject.} (See Figure \ref{figure2})

\begin{center}
\begin{figure}
	$$    \xymatrix{\ar @{} [dr] |{ }
 {\mathcal P}\ar[d]_{\text{ }}  \ar@{<->}[r]^{\widehat{N}}        & {\mathbb Z^+} \ar[d]^{\text{ }}   \\ {\mathcal O} \ar@{<->}[r]^{\widehat{N}_{\mathcal O}} & {\mathbb Q^+}
     }
$$		
	\caption{$\mathcal O$ extends $\mathcal P$ just as $\mathbb Q^+$ extends $\mathbb Z^+$,  by adjoining multiplicative inverses, with  $\widehat{N}$ and $  \widehat{N}_{\mathcal O}$ inducing isomorphisms} 
	\label{figure2}
\end{figure}
\end{center}

\subsection{Further isomorphisms}
In the proof of Theorem \ref{thm2} in the preceding section, we used that $\widehat{N}_{\mathcal O}$ is a group homomorphism. Note also that the ``multiplicity of $k$'' map $\mu_k\colon (\mathcal O,\  \cdot\  ) \to (\mathbb Z,\  +  )$  is a group homomorphism, since for $\alpha, \beta\in\mathcal O$, we have $\mu_k(\alpha\cdot \beta)=\mu_k(\alpha)+\mu_k(\beta),\  \mu_k(\emptyset)=0$. Partition statistics  extend to overpartition homomorphisms. 

Since all subgroups of an Abelian group are normal, the group isomorphism theorems can readily be applied to overpartitions. We can identify other  overpartition isomorphisms if we identify further statistics on $\mathcal O$.\footnote{Thank you to P. Cuthbertson for a useful discussion of overpartition statistics.} For example, for $\alpha= \left<1^{\mu_1} 2^{\mu_2}3^{\mu_3}\cdots i^{\mu_i}\cdots\right>\in \mathcal O, \  \mu_i\in \mathbb Z,$ let ${|\alpha|}_{\mathcal O}$ be called the {\it oversize} of the overpartition, 
and let ${\ell}_{\mathcal O}(\alpha)$ be    the {\it overlength}, 
with the following definitions:\footnote{One might  also define an {\it overnorm} $ {{N}}_{\mathcal O}(\alpha) := \prod_{i\geq 1}  i^{\mu_i}\  \in\  \mathbb Q^+$ to generalize the partition norm \cite{SS_norm}.} 
\begin{equation}{|\alpha|_{\mathcal O}} := \sum_{i\geq 1} i\cdot \mu_i\  \in\  \mathbb Z,  \  \  \  \  \  \  \  \  \  {\ell}_{\mathcal O}(\alpha)  := \sum_{i\geq 1}  \mu_i\  \in\  \mathbb Z. 
\end{equation}
Since for $\alpha, \beta \in \mathcal O$, we have $|\alpha\cdot \beta|_{\mathcal O}=|\alpha|_{\mathcal O}+|\beta|_{\mathcal O}$ and   ${\ell}_{\mathcal O}(\alpha\cdot \beta)={\ell}_{\mathcal O}(\alpha)+{\ell}_{\mathcal O}(\beta)$, 
with each map taking the multiplicative identity  $\emptyset\in\mathcal O$ to the additive identity   $0 \in \mathbb Z^+$, then in the context of overpartition group theory these statistics represent homomorphisms: 
$${|*  |_{\mathcal O}}\colon  (\mathcal O,\  \cdot\  )  \to   (\mathbb Z,\  +  ),  \   \  \  \  \   \  \  \  {\ell}_{\mathcal O}\colon (\mathcal O,\  \cdot\  )  \to   (\mathbb Z,\  +  ).$$ 

Natural homomorphisms such as these induce subsets of $\mathcal O$ that can prove useful. Recall the rational partition notation $\alpha=\lambda/\gamma,\  \lambda, \gamma\in \mathcal P,  $ for  overpartition $\alpha$ defined above. Let
\begin{flalign}
\mathcal O_{|*|}&:=\  \{\lambda/\gamma\in\mathcal O  :\  \lambda, \gamma \in \mathcal P, \  |\lambda|=|\gamma|\},\\
\mathcal O_{\ell}\  &:=\ \{\lambda/\gamma\in\mathcal O  :\  \lambda, \gamma \in \mathcal P, \ \ell(\lambda)=\ell(\gamma)\}.
\end{flalign}
Noting that both subsets $\mathcal O_{|*|}$ and $\mathcal O_{\ell}$  are closed under partition multiplication, and for each overpartition $\alpha=\lambda/\gamma$ the  multiplicative inverse $\alpha^{-1}=\gamma/\lambda$ is also in the subset, then  $\mathcal O_{|*|}$ and $\mathcal O_{\ell}$ are  (normal) subgroups of $\mathcal O$.

\begin{example}\label{thmx}
The quotient group $(\mathcal O/\mathcal O_{|*|}\  ,\  \cdot\  )$ with partition multiplication 
is isomorphic   to the  integers $(\mathbb Z, +  )$ with  addition.
\end{example}

\begin{proof}
Overnorm $|*|_{\mathcal O}$ is a group homomorphism  with kernel $\mathcal O_{|*|}\triangleleft\mathcal O$ and image $(\mathbb Z, +  )$. Then the result follows from the First Isomorphism Theorem.  
\end{proof}

\begin{remark}
    The set $\mathcal O_{|*|}$ makes connections in the theory of prefabs.\footnote{A {\it prefab} is a   combinatorial structure introduced by Bender and Goldman \cite{prefab} that generalizes broad classes of combinatorial objects including integer partitions, plane partitions and rooted unlabeled forests.} For $n\geq 1$, define $\mathcal O_{|*|}(n):= \{\alpha= \lambda / \gamma \in \mathcal O_{|*|}\  :\   |\lambda| = |\gamma| =n\}$; then $\mathcal O_{|*|}(n)$ is  equivalent to the prefab consisting of ordered pairs 
    of partitions having equal size $n\geq 1$, which is  studied in \cite{Corteel}. Many facts about $\mathcal O_{|*|}(n)$ are proved in \cite{Corteel};  e.g.   it is  equivalent   to Proposition 1 of \cite{Corteel} that 
    \begin{equation*}
    \#\mathcal O_{|*|}(n)\  =\  p(n)^2-p(n-1)^2-p(n-2)^2+p(n-5)^2+p(n-7)^2-\dots + (-1)^{\left \lceil j / 2 \right \rceil} p(n-\pi_j)^2 + \dots,
    \end{equation*}
where  $\pi_j$ denotes the $j$th pentagonal number and $\lceil x \rceil,\  x \in \mathbb R,$ is the usual ceiling function.\footnote{Thank you to B. Hopkins for pointing out the reference \cite{Corteel} to the author.} 
\end{remark}


\begin{example}
The quotient group $(\mathcal O/\mathcal O_{\ell}\  ,\  \cdot\  )$ with partition multiplication 
is isomorphic   to the  integers $(\mathbb Z, +  )$ with  addition.
\end{example}

\begin{proof} Overlength $\ell_{\mathcal O}$ is  a group homomorphism,  with kernel $\mathcal O_{\ell}\  \triangleleft \  \mathcal O$ and image $(\mathbb Z, +  )$. The result follows from the First Isomorphism Theorem, as with the previous proof. 
\end{proof}

Let $S\subseteq \mathbb Z^+$. To state another structural result, define the following subsets of $\mathcal O$:
\begin{flalign}
\mathcal O_{S}\  &:=\  \{\alpha=\left<1^{\mu_1} 2^{\mu_2}\cdots i^{\mu_i}\cdots\right>  :\  \mu_i\neq 0\  \text{only if}\  i\in S\},\\
\mathcal O_{\mathbb Z^+\backslash S}&:=\  \{\alpha=\left<1^{\mu_1} 2^{\mu_2}\cdots i^{\mu_i}\cdots\right>  :\  \mu_i= 0\  \text{if}\  i\in S\}.
\end{flalign}
I.e., $\mathcal O_{S}$ denotes overpartitions with every part in $S$, and $\mathcal O_{\mathbb Z^+\backslash S}$ those   with no part in $S$. Noting the subsets $\mathcal O_{S}, \mathcal O_{\mathbb Z^+\backslash S}$  are closed under partition multiplication, and for every overpartition $\alpha=\left<1^{\mu_1} 2^{\mu_2}\cdots i^{\mu_i}\cdots\right>$ in each subset, the  multiplicative inverse $\alpha^{-1}=\left<1^{-\mu_1} 2^{-\mu_2}\cdots i^{-\mu_i}\cdots\right>$ is   in the subset, then  $\mathcal O_{S}$ and $\mathcal O_{\mathbb Z^+\backslash S}$ are  (normal) subgroups of $\mathcal O$.


\begin{example} 
The quotient group $(\mathcal O/\mathcal O_{S}\  ,\  \cdot\  )$ with partition multiplication 
is isomorphic   to $(\mathcal O_{\mathbb Z^+\backslash S}\  ,\ \cdot\  )$ 
with partition multiplication.
\end{example}

\begin{proof}
Let us define a map $\phi_S\colon \mathcal O\to\mathcal O_{\mathbb Z^+\backslash S}$ such that $\phi_S(\alpha)=\alpha$ if $\alpha \in \mathcal O_{\mathbb Z^+\backslash S}$, and otherwise, $\phi_S(\alpha)\in \mathcal O_{\mathbb Z^+\backslash S}$ deletes all parts from  $\alpha\in \mathcal O$ that are elements of $S\subseteq \mathbb Z^+$. We have for $\alpha, \beta \in \mathcal O$ that $\phi_S(\alpha \cdot \beta)=\phi_S(\alpha)\cdot\phi_S(\beta)\in \mathcal O_{\mathbb Z^+\backslash S}$ with  partition multiplication   on both sides of the map, a homomorphism of groups. The kernel of $\phi_S$ is $\mathcal O_S$ and the image is $(\mathcal O_{\mathbb Z^+\backslash S}, \  \cdot\  )$. Then the result follows from the First Isomorphism Theorem.
\end{proof}

Here is one more application of the First Isomorphism Theorem. For $m\in\mathbb Z$, let $\mathbb Z/m\mathbb Z$ denote the integers modulo $m$, and let 
\begin{equation}
\mathcal O_{m}\  :=\  \{\alpha\in\mathcal O :\  \ell_{\mathcal O}(\alpha)\in m\mathbb Z\}.
\end{equation}
The set $\mathcal O_m$ is closed under partition multiplication, and for each $\alpha\in \mathcal O_m$, note that $\ell_{\mathcal O}(\alpha^{-1})=-\ell_{\mathcal O}(\alpha)\in m\mathbb Z$; thus $\alpha^{-1}\in\mathcal O_m$.  Then $\mathcal O_m$ is a (normal) subgroup of $\mathcal O$.

\begin{example}
The quotient group $(\mathcal O/\mathcal O_{m}\  ,\  \cdot\  )$ with partition multiplication 
is isomorphic   to $(\mathbb Z/m\mathbb Z\  , +\  )$ 
with addition modulo $m$.
\end{example}

\begin{proof}
Let us define a map $\overline{\ell}_{\mathcal O}^m\colon \mathcal O\to \mathbb Z/m\mathbb Z$ such that $\overline{\ell}_{\mathcal O}^m(\alpha)$ is the congruence class of $\ell_{\mathcal O}(\alpha)\in \mathbb Z$, modulo $m$. Since $\overline{\ell}_{\mathcal O}^m(\alpha\cdot \beta)
=\overline{\ell}_{\mathcal O}^m(\alpha)+\overline{\ell}_{\mathcal O}^m(\beta)$ under addition in $(\mathbb Z/m\mathbb Z,+\  )$, $\overline{\ell}_{\mathcal O}^m(\emptyset)=0$, the map is a homomorphism of groups. The kernel of $\overline{\ell}_{\mathcal O}^m$ is $\mathcal O_m \triangleleft \mathcal O$, and the image is $(\mathbb Z/m\mathbb Z,+\  )$. Then the result follows from the First Isomorphism Theorem.
\end{proof}

\section{Toward a ring theory of overpartitions}

Looking to the future,  the ring theory of overpartitions is of   interest to the author. With my collaborator Ian Wagner, I   worked out  two different versions of such a theory that were noted in \cite{Schneider_PhD}, Appendix B, one based on matrix algebra and another using group rings; these included a cursory study of irreducibles that I would like to extend. I also shared preliminary notes in a different direction to the audience of Michigan Technological University's  online Partition and $q$-Series Seminar \cite{multipartitions}. It is outside the scope of this note to detail these ideas further. In  the aforementioned cases, I introduced {\it ad hoc} partition extensions such as ``antipartitions'' and ``rational partitions'' as the group elements, prior to having the equivalent interpretation of overpartitions from Cuthbertson and Waldron. 

I closed Appendix B of  my Ph.D. dissertation \cite{Schneider_PhD} with the following words: ``It is our goal to use this    ring structure to seek alternative proofs of partition bijections, Ramanujan-like congruences and other classical partition theorems, as well as to seek applications in Andrews’s theory of partition ideals \cite{Andrews}.'' I hope that new theorems and structures can be discovered in the sets $\mathcal P$ and $\mathcal O$ by applying the   tools of abstract algebra.



%
%
%
%
%
\section*{Acknowledgements}
I am   very  grateful to the  students in my Fall 2023 graduate algebra course at Michigan Technological University for useful discussions on these combinatorial-algebraic ideas: Aidan Botkin, Philip Cuthbertson,  Caleb Hiltunen, Cody McCarthy, Mason Moran,  Faria Tasnim and Hunter Waldron. I am   thankful to David Hemmer and Hunter Waldron for    corrections to an earlier draft of this paper; and  to Krishnaswami Alladi, George Andrews,  Madeline L. Dawsey, Philip Engel, Brian Hopkins, Matthew R. Just, Jeffrey Lagarias, Ken Ono, Andrew V. Sills, and Ian Wagner for     conversations that informed my work.  Furthermore, I am   thankful to the anonymous referee of this paper.

\end{document}